\def\dj{d\kern-0.4em\char"16\kern-0.1em}
\def\Dj{\mbox{\raise0.3ex\hbox{-}\kern-0.4em D}}
\newtheorem{theorem}{Theorem}[section]
\newtheorem{lemma}[theorem]{Lemma}
\newtheorem{proposition}[theorem]{Proposition}
\newtheorem{corollary}[theorem]{Corollary}
\newtheorem{definition}[theorem]{Definition}
\newtheorem{example}[theorem]{Example}
\newtheorem{remark}[theorem]{Remark}
\renewcommand{\@dotsep}{10000}
\newenvironment{proof}
{\noindent
{\it Proof.}}
{\hspace{\stretch{1}}%
$\Box$}
\newcounter{primer}[section]
\DeclareMathOperator{\Cen}{Cen}
\tikzset{my loop/.style =  {to path={
  \pgfextra{}
  [looseness=4,min distance=5mm]
  \tikz@to@curve@path},font=\sffamily\small
  }}
\newcommand{\simp}{\stackrel{p}{\sim}}
\newcommand{\sime}{\stackrel{e}{\sim}}
\newcommand{\equive}{\stackrel{e}{\equiv}}
\newcommand{\myitem}[1]{%
\item[#1]\protected@edef\@currentlabel{#1}%
}
\begin{document}

\thispagestyle{empty}
\begin{center}
\Large{Some New Results Concerning Power Graphs and Enhanced Power Graphs of Groups}
\vspace{3mm}

\normalsize{I. Bo\v snjak, R. Madar\' asz, S. Zahirovi\' c}
\end{center}

\begin{abstract}
The directed power graph $\vec{\mathcal P}(\mathbf G)$ of a group $\mathbf G$ is the simple digraph with vertex set $G$ such that $x\rightarrow y$ if $y$ is a power of $x$. The power graph of $\mathbf G$, denoted by $\mathcal P(\mathbf G)$, is the underlying simple graph. The enhanced power graph $\mathcal P_e(\mathbf G)$ of $\mathbf G$ is the simple graph with vertex set $G$ in which two elements are adjacent if they generate a cyclic subgroup.

In this paper, it is proven that, if two groups have isomorphic power graphs, then they have isomorphic enhanced power graphs, too. It is known that any finite nilpotent group of order divisible by at most two primes has perfect enhanced power graph. We investigated whether the same holds for all finite groups, and we have obtained a negative answer to that question. Further, we proved that, for any $n\geq 0$ and prime numbers $p$ and $q$, every group of order $p^nq$ and $p^2q^2$ has perfect enhanced power graph. We also give a complete characterization of symmetric and alternative groups with perfect enhanced graphs.
%
\end{abstract}

\section{Introduction}

The directed power graph of a group $\mathbf G$ is the simple directed graph whose vertex set is $G$, and in which $x\rightarrow y$ if $y$ is a power of $x$. Its underlying simple graph is called the power graph of the group. The directed power graph was introduced by Kelarev and Quinn \cite{prvi kelarev i kvin}, while the power graph was first studied by Chakrabarty, Ghosh and Sen \cite{cakrabarti}. The power graph has been subject of many papers, including \cite{power graph, power graph 2, cameron3, on the structure, sitov, drugi kelarev i kvin, kelarev3, kelarev4, cameron4,samostalni,cameron-manna-mehatari}. In these papers, combinatorial and algebraic properties of the power graph have received great attention. For more details, papers \cite{pregled za power grafove,cameron-graphs-defined-on-groups,pregledni-cameron-i-druzina} are recommended.

The enhanced power graph of a group is the simple graph whose vertices are elements of the group, and in which two vertices are adjacent if they are powers of some element of the group, i.e. if they generate a cyclic subgroup.  The enhanced power graph was introduced by Aalipour et al. \cite{on the structure}, although the complement of this graph, under the name the noncyclic graph of the group, was studied ten years earlier in \cite{abdolahi}. The enhanced power graph was further studied in \cite{nas prvi, bera i bunja, hamzeh2, ma i she, panda dalal i kumar, cameron-phan, parveen-kumar-singh-ma, parveen-dalal-kumar, parveen-kumar}. In these papers, combinatorial properties of the enhanced power graph, its relation to the power graph of the group, as well as  algebraic properties of its automorphism group, have received significant attention.



Cameron \cite{power graph 2} proved that two finite groups that have isomorphic power graphs also have isomorphic directed power graphs. Bo\v snjak, Madar\' asz and Zahirovi\' c \cite{nas prvi} proved that the enhanced power graph of a finite group determines the directed power graph. Therefore, by \cite{power graph 2} and \cite{nas prvi}, in the case of finite groups, the power graph, the directed power graph and the enhanced power graph determine each other. Cameron, Guerra and Jurina \cite{cameron3} proved that, if two torsion-free groups of nilpotency class $2$ have isomorphic power graphs, then they have isomorphic directed power graphs as well. Zahirovi\' c \cite{samostalni} proved that this implication holds whenever at least one of the two groups is torsion-free and of nilpotency class $2$, and in \cite{drugi samostalni} he generalized this result by proving the same implication when at least one of the groups does not contain a Pr\" ufer group as a subgroup. By the result from \cite{drugi samostalni}, if a group does not contain a Pr\" ufer group as a subgroup, then its enhanced power graph is determined by the power graph. In Section \ref{sekcija medjusobno odredjivanje opsteg i nenula-stepenog grafa}, we are going to prove that the power graph of any group determines the enhanced power graph.

Many combinatorial properties of the power graph have been studied, and all that has motivated similar research for the enhanced power graph. Aalipour et al. \cite{on the structure} showed that the power graph of every group of bounded exponent is perfect. They also proved that the clique number of the power graph of any group is at most countable, and they posed the question whether the chromatic number of the power graph of any group is at most countable too. Shitov \cite{sitov} gave the affirmative answer to their question by proving that every power-associative groupoid, i.e. groupoid whose all one-generated subgroupoids are semigroups, has power graph whose chromatic number is at most countable. Aalipour at al. \cite{on the structure} proved that the clique number of the enhanced power graph is at most countable. Recently, Cameron and Phan \cite{cameron-phan} proved that the enhanced power graph of every finite group is weakly perfect. In \cite{nas prvi}, the authors proved that a finite nilpotent group has perfect enhanced power graph if and only if it has at most two noncyclic Sylow subgroups.
Consequently, for any prime numbers $p$ and $q$, every nilpotent group of order $p^nq^m$ has perfect enhanced power graph. In Section \ref{sekcija za perfektnost} of the present paper, we show that this result cannot be generalized for all finite groups, by presenting an example of a group of order $1296=2^43^4$ whose enhanced power graph is not perfect. We prove that any group of order $p^nq$ or $p^2q^2$ has perfect enhanced power graph. Further in this section, a characterization of finite symmetric and alternative groups with perfect enhanced power graph is given. Namely, the enhanced power graph of $\mathbf S_n$ is perfect if and only if $n\leq 7$, and the enhanced power graph of $\mathbf A_n$ is perfect if and only if $n\leq 8$.

Note that this version of the manuscript differs from the first version submitted to arXiv. Namely, we removed one theorem and its consequence which turned out to be incorrect. For detailed information, see comments at the arXiv page.

\section{Basic Notions and Notations}

In this paper, by a graph we mean a simple graph. For a graph $\Gamma$, $V(\Gamma)$ and $E(\Gamma)$ denote the vertex set and the edge set of $\Gamma$, respectively. If two vertices $x$ and $y$ of $\Gamma$ are adjacent, we denote that fact by $x\sim_\Gamma y$, or shortly by $x\sim y$. We write $x\simeq_\Gamma y$ if $x\sim_\Gamma y$ or $x=y$.  The {\bf closed neighborhood} of a vertex $x$ of a graph $\Gamma$ is the set \[\overline N_\Gamma(x)=\{y\mid y\sim_\Gamma x\text{ or }y=x\},\] and we shall shortly denote it by $\overline N(x)$. If two vertices $x$ and $y$ of $\Gamma$ have the same closed neighborhood, we shall denote it by $x\equiv_\Gamma y$, or simply by $x\equiv y$. Subgraph of $\Gamma$ induced by a set $X\subseteq V(\Gamma)$ we denote by $\Gamma[X]$. The {\bf complement} of a graph $\Gamma$ is the graph $\overline \Gamma$ with vertex set $V(\Gamma)$ and such that $x\sim_{\overline \Gamma} y$ if and only if $x\not\sim_{\Gamma}y$.

{\bf Eccentricity} of a vertex is the maximal distance between that vertex and any other vertex of the graph. {\bf Radius} of a graph is the minimal eccentricity of a vertex of the graph. The {\bf center} of a graph is the set of all of its vertices with the minimal eccentricity.

For graphs $\Gamma$ and $\Delta$, the {\bf strong product} of $\Gamma$ and $\Delta$ is the graph $\Gamma\boxtimes\Delta$ with vertex set $V(\Gamma)\times V(\Delta)$ such that
\begin{align*}
(x_1,y_1)\sim_{\Gamma\boxtimes\Delta}(x_2,y_2)\text{ if }(x_1=x_2\vee x_1\sim_\Gamma x_2)\wedge(y_1= y_2\vee y_1\sim_\Delta y_2)
\end{align*}
whenever $(x_1,y_1)\neq(x_2,y_2)$.

The vertex set of a directed graph, or a digraph, $\vec\Gamma$ is denoted by $V(\vec{\Gamma})$. Its edge set, which consists  of ordered pairs of different vertices of $\vec{\Gamma}$, is denoted by $E(\vec\Gamma)$. Its edges are also called directed edges or arcs. If $(x,y)\in E(\vec\Gamma)$, we denote that fact by $x\rightarrow_\Gamma y$, or shortly by $x\rightarrow y$.

All through this paper, algebraic structures such as groups will be denoted by bold capitals, and their universes will be denoted by respective regular capital letters. For elements $x$ and $y$ of a group $\mathbf G$, we write $x\approx_{\mathbf G} y$, or simply $x\approx y$, if $\langle x\rangle=\langle y\rangle$, where $\langle x\rangle$ denotes the subgroup generated by $x$. By $o(x)$ we shall denote the order of the element $x$ of a group.

Now we introduce the definitions of the (directed) power graph and the enhanced power graph of a group.

\begin{definition}\label{definicije pridruzenih grafova}
The {\bf directed power graph} of a group $\mathbf G$ is the digraph $\vec{\mathcal P}(\mathbf G)$ whose vertex set is $G$, and in which there is a directed edge from $x$ to $y$, $x\neq y$, if there exists $n\in\mathbb Z\setminus\{0\}$ such that $y=x^n$.

The {\bf power graph} of a group $\mathbf G$ is the graph $\mathcal P(\mathbf G)$ whose vertex set is $G$, and whose vertices $x$ and $y$, $x\neq y$, are adjacent if there exists $n\in\mathbb Z\setminus\{0\}$ such that $y=x^n$ or $x=y^n$.

The {\bf enhanced power graph} of a group $\mathbf G$ is the graph $\mathcal P_e(\mathbf G)$ whose vertex set is $G$, and whose vertices $x$ and $y$, $x\neq y$, are adjacent if there exist $z\in G$ and $n,m\in\mathbb Z$ such that $x=z^n$ and $y=z^m$.
\end{definition}

If there is a directed edge from $x$ to $y$ in $\vec{\mathcal P}(\mathbf G)$, instead of writing $x\rightarrow_{\vec{\mathcal P}(\mathbf G)} y$, we shall denote that by $x\rightarrow_{\mathbf G} y$ or shortly by $x\rightarrow y$. Similarly, instead of writing $x\sim_{\mathcal P(\mathbf G)}y$, we will write $x\simp_{\mathbf G} y$ or shortly $x\simp y$, and instead of writing $x\sim_{\mathcal P_e(\mathbf G)}y$ we shall write $x\sime_{\mathbf G} y$ or shortly $x\sime y$.

If elements $x$ and $y$ of a group $\mathbf G$ have the same closed neighborhood in $\mathcal P(\mathbf G)$, instead of writing $x\equiv_{\mathcal P(\mathbf G)}y$, we shall denote that by $x\equiv_{\mathbf G} y$ or shortly $x\equiv y$. Also, instead of writing $x\equiv_{\mathcal P_e(\mathbf G)}y$, we write $x\equive_{\mathbf G}y$ or shortly $x\equive y$.

We note that the directed power graph and the power graph of a group are often defined such that $x\rightarrow y$ if $y=x^n$ for some $n\in\mathbb Z$, and such that $x\simp y$ if $y=x^n$ or $x=y^n$ for some $n\in\mathbb Z$. Here, we use slightly different definition of the power graph by which the identity element of the group is adjacent to no element of infinite order. This way it will be more convenient to state some of our arguments, and it is justified because, by \cite[Theorem 1]{samostalni}, the power graphs by these two definitions determine each other up to isomorphism.

\section{The Power Graph and the Enhanced Power Graph of a Group}\label{sekcija medjusobno odredjivanje opsteg i nenula-stepenog grafa}

In this section, we prove that the enhanced power graph of a group is determined by the power graph. The converse is not true; there are groups with isomorphic enhanced power graphs whose power graphs are not isomorphic. For example, the enhanced power graphs of $\mathbb Z$ and $\mathbf C_{p^\infty}$ are both complete graphs, while their power graphs are not isomorphic ($\mathcal P(\mathbf C_{p^{\infty}})$ is complete, while $\mathcal P(\mathbb Z)$ is not).

We shall start by introducing the notions of the infinite-order segment and the finite-order segment of the power graph of a group $\mathbf G$. For a group $\mathbf G$, by $G_\infty$ and $G_{<\infty}$ we denote the set of all finite-order elements and the set of all infinite-order elements of $\mathbf G$, respectively. The subgraph of $\mathcal P(\mathbf G)$ induced by $G_{<\infty}$ is called the {\bf finite-order segment} of the power graph of $\mathbf G$. The subgraph of $\mathcal P(\mathbf G)$ induced by $G_{\infty}$ is called the {\bf infinite-order segment} of $\mathcal P(\mathbf G)$. Note that the finite-order segment of the power graph of a group is a connected component of $\mathcal P(\mathbf G)$, while the infinite-order segment is a union of connected components of  $\mathcal P(\mathbf G)$. The finite-order segment and the infinite-order segment of the directed power graph and the enhanced power graph of a group are introduced in the same way. Also, since the enhanced power graph is connected, neither the infinite-order segment nor the finite-order segment of the enhanced power graph is a connected component.

Notice that the radius of the finite-order segment of the power graph of any group is $1$ because the identity element is adjacent to all other elements of finite order of the group. Therefore, for a group $\mathbf G$ and the finite-order segment $\Phi$ of its power graph, the {\bf center} of $\Phi$ is the set
\[\Cen(\Phi)=\big\{x\in G_{<\infty}\mathrel{\big|} x\simp y\text{ for all }y\in G_{<\infty}\setminus\{x\}\big\}.\]

By the following lemma, which was proven in \cite{drugi samostalni}, an isomorphism between the power graphs of two groups maps the finite-order segment onto the finite-order segment.

\begin{lemma}[{\cite[Lemma 4]{drugi samostalni}}]\label{konacni se slikaju na konacne}
Let $\mathbf G$ and $\mathbf H$ be groups, and let $\varphi:G\rightarrow H$ be an isomorphism from $\mathcal P(\mathbf G)$ to $\mathcal P(\mathbf H)$. Then $\varphi(G_{<\infty})=H_{<\infty}$.
\end{lemma}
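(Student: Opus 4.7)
The plan is to characterise $G_{<\infty}$ as a connected component of $\mathcal G(\mathbf G)$ and then, by analysing closed twin classes, rule out any component of $\mathcal G(\mathbf H)$ lying in $H_\infty$ as the image of $G_{<\infty}$. First I would verify that $G_{<\infty}$ is a connected component: under the modified definition adopted here, the identity $e$ is adjacent to every other torsion element via $e=x^{o(x)}$, so $G_{<\infty}$ is connected; and no edge of $\mathcal G(\mathbf G)$ crosses between $G_{<\infty}$ and $G_\infty$, since a nonzero power of a torsion element is torsion while a nonzero power of an element of infinite order has infinite order. The same reasoning applies to $\mathbf H$. Hence $\varphi(G_{<\infty})$ is a connected component of $\mathcal G(\mathbf H)$ and coincides with $\overline N_{\mathcal G(\mathbf H)}(\varphi(e))$. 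If $h:=\varphi(e)$ has finite order in $\mathbf H$, all of its neighbours are torsion, so $\varphi(G_{<\infty})\subseteq H_{<\infty}$; equality of the connected components containing $h$ then closes that branch.

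The heart of the proof is the remaining case, where $h$ has infinite order. Then the subgraph of $\mathcal G(\mathbf H)$ induced on $\langle h\rangle\setminus\{e_H\}$ is isomorphic to the power graph of $\mathbb Z\setminus\{0\}$, so $\{h,h^2,h^4,h^8,\dots\}$ is an infinite clique and, along it, each closed twin class of $\mathcal G(\mathbf H)$ equals $\{h^{2^i},h^{-2^i}\}$ and has size exactly $2$. This uses the observation that for infinite-order elements of any group the relation $\equiv$ coincides with ``generating the same cyclic subgroup'', together with the fact that an infinite cyclic subgroup has exactly two generators. Pulling this clique back through $\varphi^{-1}$ produces an infinite clique $\{z_i\}_{i\ge 0}$ in $G_{<\infty}$ with $z_0=e$, and every $z_i$ with $i\ge 1$ lies in a closed twin class of size $2$. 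In $\mathcal G(\mathbf G)$ all generators of $\langle z_i\rangle$ are closed twins of $z_i$, so $\phi(|\langle z_i\rangle|)\le 2$, forcing $|\langle z_i\rangle|\in\{1,2,3,4,6\}$. Because $\varphi$ carries closed twin classes to closed twin classes and the subgroups $\langle h^{2^i}\rangle$ are pairwise distinct, the subgroups $\langle z_i\rangle$ are pairwise distinct too; but $\{z_i\}$ is a clique in $\mathcal G(\mathbf G)$, so the $\langle z_i\rangle$ sit pairwise in a chain of containments, and no chain of distinct cyclic subgroups of orders restricted to $\{1,2,3,4,6\}$ can be infinite. This contradiction rules out $h$ having infinite order.

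The main obstacle is the twin-class bookkeeping on which the contradiction hinges. The delicate points are (a) that on infinite-order elements of any group the closed twin relation $\equiv$ really does coincide with ``generating the same cyclic subgroup'', so that the closed twin class of $h^{2^i}$ in the whole graph $\mathcal G(\mathbf H)$ is just $\{h^{2^i},h^{-2^i}\}$, and (b) that in $\mathcal G(\mathbf G)$ the closed twin class of a torsion element $z$ always contains every generator of $\langle z\rangle$, producing the sharp lower bound on its size by $\phi(|\langle z\rangle|)$. Both are short arguments from the definitions of adjacency and of the closed-neighbourhood equivalence, but they carry the weight of the proof; the rest of the argument is a clean assembly of these facts with the chain-of-cyclic-subgroups combinatorics.
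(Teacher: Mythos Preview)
The paper does not contain its own proof of this lemma; it is quoted verbatim from \cite[Lemma 3.1]{drugi samostalni} and used as a black box. So there is no in-paper argument to compare against.

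That said, your argument is correct and self-contained. The two claims you flag as delicate both go through. For (a), if $x,y$ have infinite order and $\overline N(x)=\overline N(y)$ with $x\neq y$, then (after a swap) $y=x^n$ for some $n\neq 0$; picking a prime $p\nmid n$ one has $x^p\in\overline N(x)=\overline N(y)$, and each of the three adjacency possibilities $x^p=y$, $x^p=y^k$, $y=(x^p)^k$ forces $p\mid n$, so $|n|=1$. Moreover no finite-order element can be a closed twin of $h^{2^i}$, since its closed neighbourhood lies in the component $H_{<\infty}$ while $h^{2^{i+1}}\in\overline N(h^{2^i})\setminus H_{<\infty}$; hence $[h^{2^i}]_\equiv=\{h^{2^i},h^{-2^i}\}$ exactly. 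For (b), the inclusion $[z]_\approx\subseteq[z]_\equiv$ gives $\varphi\!\left(o(z_i)\right)\le 2$, so $o(z_i)\in\{2,3,4,6\}$ for $i\ge 1$ (the value $1$ is excluded since $z_0=e$ and the $z_i$ are distinct). Distinct $\equiv$-classes force distinct $\approx$-classes, hence distinct $\langle z_i\rangle$; pairwise adjacency of torsion elements means pairwise comparability of the $\langle z_i\rangle$, and a totally ordered family of cyclic subgroups of order at most $6$ is finite. The contradiction is clean.

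One small cosmetic point: your identification $\varphi(G_{<\infty})=\overline N_{\mathcal G(\mathbf H)}(h)$ already uses (implicitly) that $\overline N_{\mathcal G(\mathbf G)}(e)=G_{<\infty}$, which is precisely where the paper's modified definition of the power graph (excluding $n=0$) is doing work; you note this, and it is the right place to invoke it.
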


\begin{proposition}\label{izomorfizam izmedju infinite-order segmenata}
Let $\mathbf G$ and $\mathbf H$ be groups with isomorphic power graphs. Then the infinite-order segments of the enhanced power graphs of $\mathbf G$ and $\mathbf H$ are isomorphic too.
\end{proposition}

\begin{proof}
By Lemma \ref{konacni se slikaju na konacne}, $\mathcal P(\mathbf G)\cong\mathcal P(\mathbf H)$ implies that $\mathcal P(\mathbf G)$ and $\mathcal P(\mathbf H)$ have isomorphic finite-order segments and isomorphic infinite-order segments. By \cite[Theorem 10]{drugi samostalni}, if the power graphs of two groups have isomorphic infinite-order segments, then their directed power graphs have isomorphic infinite-order segments. Since the directed power graph of a group determines the enhanced power graph, an isomorphism between infinite-order segments of directed power graphs of the groups is also an isomorphism between the infinite-order segments of enhanced power graphs. Therefore, $\mathcal P_e(\mathbf G)$ and $\mathcal P_e(\mathbf H)$ have isomorphic infinite-order segments.
\end{proof}\\

By the above proposition, it remains to prove that an isomorphism between the finite-order segments of the power graphs of two groups is an isomorphism between the finite-order segments of the enhanced power graphs. The next proposition was proven in \cite{drugi samostalni}, and it is a generalization of \cite[Proposition 4]{power graph 2} proven by Cameron.

\begin{proposition}[{\cite[Proposition 13]{drugi samostalni}}]\label{kameronova propozicija}
Let $\mathbf G$ be a group, let $\Phi$ denote the finite-order segment of $\mathcal G^\pm(\mathbf G)$, and let $\lvert\Cen(\Phi)\rvert>1$. Let $S=\Cen(\Phi)$. Then one of the following holds:
\begin{enumerate}
\item $\mathbf G_{<\infty}$ is a Pr\" ufer group. In this case, $S=G_{<\infty}$ and $S$ is infinite.
\item $\mathbf G_{<\infty}$ is a cyclic group of prime power order. In this case, $S=G_{<\infty}$ and $S$ is finite.
\item $\mathbf G_{<\infty}$ is a cyclic group whose order is the product of two different prime numbers. In this case, $\lvert S\rvert\geq \frac 12\lvert G_{<\infty}\rvert$ and the set $G_{<\infty}\setminus S$ induces a disconnected subgraph of $\Phi$.
\item $\mathbf G_{<\infty}$ is a cyclic group whose order is divisible by at least two different prime numbers, but whose order is not the product of two different prime numbers. In this case, the set $G_{<\infty}\setminus S$ induces a connected subgraph of $\Phi$.
\item There is a prime number $p$ such that the order of every element from $G_{<\infty}$ is a power of $p$, but $\langle G_{<\infty}\rangle$ is not a cyclic, nor a Pr\" ufer group. In this case, $\lvert S\rvert<\frac 12\lvert G_{<\infty}\rvert$ and the set $G_{<\infty}\setminus S$ induces a disconnected subgraph of $\Phi$.
\end{enumerate}
\end{proposition}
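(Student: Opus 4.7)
The plan is to translate the center condition into cyclic-subgroup-containment language and then case-split on the prime-order structure of $G_{<\infty}$. Fix $x \in S \setminus \{e\}$, which exists since $e \in S$ and $\lvert S \rvert > 1$. For every $y \in G_{<\infty} \setminus \{x\}$, the edge $x \simp y$ means $y = x^n$ or $x = y^n$ for some nonzero $n$, and hence $\langle y\rangle \subseteq \langle x\rangle$ or $\langle x\rangle \subseteq \langle y\rangle$. Consequently every cyclic subgroup generated by a torsion element is comparable with $\langle x\rangle$ under inclusion, and those that contain $\langle x\rangle$ form a chain-like family of finite cyclic overgroups of $\langle x\rangle$.

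I would first handle the case in which the orders of elements of $G_{<\infty}$ involve at least two distinct primes. Taking torsion $y_1, y_2$ whose orders are divisible by different primes $p$ and $q$, comparability of $\langle y_i\rangle$ with $\langle x\rangle$ (and with other elements of $S$) forces $\langle G_{<\infty}\rangle$ to be locally cyclic and finitely generated, hence a single finite cyclic group $\mathbf C_n$. Cases 3 and 4 then split according to whether $n$ is the product of exactly two distinct primes or has further prime structure. The claims about $S$ and about the connectedness of $G_{<\infty} \setminus S$ reduce to an elementary analysis of the power graph of $\mathbf C_n$: counting generators plus identity gives the bound $\lvert S\rvert \geq \frac{1}{2} \lvert G_{<\infty}\rvert$ in case 3, and the divisor lattice of $n$ determines whether the complement of $S$ stays connected through intermediate cyclic subgroups or breaks apart into the two non-trivial proper subgroups of orders $p$ and $q$.

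In the remaining case every torsion element has order a power of a single prime $p$, so each $\langle y\rangle$ is a cyclic $p$-group. If these cyclic $p$-groups are mutually comparable, their directed union is locally cyclic, and hence either finite cyclic (case 2) or the Pr\"ufer $p$-group (case 1). Otherwise there are incomparable $\langle y_1\rangle, \langle y_2\rangle$, both of which still contain $\langle x\rangle$ by the comparability observation; this is case 5. The main obstacle is case 5, where one must show both $\lvert S\rvert < \frac{1}{2} \lvert G_{<\infty}\rvert$ and that $G_{<\infty} \setminus S$ is disconnected. I would argue that any $s \in S$ must lie in the intersection of every pair of incomparable maximal cyclic $p$-subgroups of $\langle G_{<\infty}\rangle$ meeting $G_{<\infty}$, so $S$ is confined to a proper subgroup; two incomparable maximal cyclic $p$-subgroups $M_1, M_2$ then contribute disjoint pieces $M_i \cap (G_{<\infty} \setminus S)$ with no power-graph edges between them, and a counting comparison of $\lvert M_1 \cup M_2\rvert$ against $\lvert M_1 \cap M_2\rvert$ delivers the size bound on $S$.
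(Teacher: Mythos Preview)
The paper does not prove this proposition at all: it is quoted verbatim from \cite{drugi samostalni} (as the attribution ``\cite[Proposition 3.10]{drugi samostalni}'' in the heading indicates) and then used as a black box in the proof of Theorem \ref{slozena susedstva imaju sve isto kod oba}. So there is nothing in the present paper to compare your proposal against.

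As for the proposal itself, the overall case split (multi-prime versus single-prime) and the opening observation that every torsion cyclic subgroup is comparable with $\langle x\rangle$ are the right starting points, and your treatment of cases 1, 2, and 5 is essentially sound. The weak link is the sentence ``comparability of $\langle y_i\rangle$ with $\langle x\rangle$ (and with other elements of $S$) forces $\langle G_{<\infty}\rangle$ to be locally cyclic and finitely generated.'' Comparability with the single element $x$ does \emph{not} by itself make the family of cyclic overgroups of $\langle x\rangle$ a chain, nor does it bound their orders; you need an additional argument here. One route: first show that every prime dividing the order of any torsion element already divides $o(x)$ (your comparability observation applied to elements of prime order gives this), and then argue that any two cyclic overgroups $\langle y_1\rangle,\langle y_2\rangle\supsetneq\langle x\rangle$ must themselves be comparable, since otherwise one produces an element (for example in $\langle y_1\rangle\setminus\langle y_2\rangle$ of suitable prime-power order) that is not $\simp$-related to $x$ or to some other member of $S$. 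Once the overgroups form a chain with bounded orders, finiteness and cyclicity of $G_{<\infty}$ follow. Without filling in this step, the passage from ``multi-prime'' to ``finite cyclic'' is a genuine gap.
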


By the above proposition we will be able to deal with the finite-order segments by separating the proof into two cases: one in which the center of the finite-order segment of the power graph is trivial, i.e. contains only one element, and the other in which the center of the finite order segment contains more than one element. First we shall focus on the first case.



If $x$ and $y$ are adjacent in the power graph, there is a way to determine whether $x\rightarrow y$ or $y\rightarrow x$ by observing cardinalities of $[x]_\approx$ and $[y]_\approx$ and their relation in the power graph; however, from the power graph of $\mathbf G$ we \enquote{do not see} $\approx$-classes. Nevertheless, we do see $\equiv$-classes, and each $\equiv$-class is a union of $\approx$-classes. Cameron \cite{power graph 2} noticed that in the power graph of a finite group, all $\equiv$-classes that are unions of more than one $\approx$-classes are of one specific form. The following lemma, which was proven in \cite{drugi samostalni}, is a generalization of that.

\begin{lemma}[{\cite[Lemma 15]{drugi samostalni}}]\label{dva tipa klasa}
Let $\mathbf G$ be a group such that $\lvert\Cen(\Phi)\rvert=1$, where $\Phi$ is the finite-order segment of $\mathcal P(\mathbf G)$.  Then every $\equiv$-class $C$ of $\Phi$ is one of the following forms:
\begin{enumerate}
\item $C$ is a $\approx$-class. Such a $\equiv$-class we shall call a {\bf simple $\equiv$-class}.
\item $C=\{x\in\langle y\rangle\mid o(x)\geq p^s\}$, where $p$ is a prime number, $y$ is an element of order $p^r$ for some $r\in\mathbb N$, and where $s\in\mathbb N$ satisfies $r>s>0$. In this case, $C$ is a union of $r-s+1$ $\approx$-classes, and we say that such a $\equiv$-class is a {\bf complex $\equiv$-class}.
\item $C=\bigcup_{k\geq s}[x_k]_\approx$ for some $s\geq 1$, where, for some prime number $p$, each $x_k$ is an element of order $p^k$, and where $x_k\in\langle x_{k+1}\rangle$, for all $k\geq s$. We call such a $\equiv$-class an {\bf infinitely complex} $\equiv$-class.
\end{enumerate}
\end{lemma}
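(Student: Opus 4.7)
The plan is to consider a $\equiv$-class $C$ that is not a single $\approx$-class, and to show that it has the structure described in case 2 or 3 (case 1 being automatic otherwise). Pick $x, y \in C$ with $x \not\approx y$. Then $x \simp y$ (since $y \in \overline N(x)$), so one of $\langle x \rangle, \langle y \rangle$ properly contains the other; say $\langle y \rangle \subsetneq \langle x \rangle$. The hypothesis $\lvert\Cen(\Phi)\rvert = 1$ forces the $\equiv$-class of the identity $e$ to be $\{e\}$, so $y \neq e$, and I may write $o(x) = n$, $o(y) = m$ with $1 < m < n$ and $m \mid n$.

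The first substantive step is to show $n$ is a prime power. For every divisor $d$ of $n$, a generator $z_d$ of the unique cyclic subgroup of $\langle x \rangle$ of order $d$ lies in $\overline N(x) = \overline N(y)$, forcing $d \mid m$ or $m \mid d$. If $n$ had two distinct prime divisors $p_1, p_2$ with maximal prime-power parts $d_1 = p_1^{a_1}$ and $d_2 = p_2^{a_2}$, then: the subcase $d_1 \mid m$ and $d_2 \mid m$, iterated over all primes, would give $n \mid m$, contradicting $m < n$; the subcase $m \mid d_1$ and $m \mid d_2$ would force $m \mid \gcd(d_1, d_2) = 1$, so $y = e$; and the mixed case would force $p_1^{a_1} \mid p_2^{a_2}$, impossible. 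Hence $n = p^r$ and $m = p^s$ with $0 < s < r$. The same divisor argument, applied with roles adjusted to whichever pair from $C$ is in containment, shows that every element of $C$ has order a power of the same $p$; and for any two distinct $z_1, z_2 \in C$ the relation $z_1 \simp z_2$ (from $z_1 \equiv z_2$) makes $\langle z_1 \rangle, \langle z_2 \rangle$ comparable. Thus $\{\langle z \rangle : z \in C\}$ is a chain of nontrivial cyclic $p$-subgroups.

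Now let $p^S = \min\{o(z) : z \in C\}$ with witness $N \in C$, and split on whether this chain has a maximum $\langle M \rangle$ with $o(M) = p^R$. If yes, the claim is $C = \{z \in \langle M \rangle : o(z) \geq p^S\}$ (case 2 with parameters $r = R$, $s = S$, $y = M$); otherwise the chain is an infinite ascending chain, giving case 3. The containment of $C$ in the prescribed set is immediate from the chain property. For the reverse containment one has to verify, given $z \in \langle M \rangle$ with $o(z) = p^t$ and $S \leq t \leq R$, that $z \equiv M$. The inclusion $\overline N(M) \subseteq \overline N(z)$ is routine by splitting on whether a neighbor $w$ of $M$ lies in $\langle M \rangle$ or contains $M$. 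The reverse inclusion is the main obstacle: given $w \simp z$ with $w \notin \langle M \rangle$, it is not a priori clear that $w \simp M$. The resolution is to use the minimum-order witness $N$: from $w \notin \langle z \rangle \leq \langle M \rangle$ one has $\langle z \rangle \leq \langle w \rangle$, so $\langle N \rangle \leq \langle z \rangle \leq \langle w \rangle$ yields $w \simp N$, and $N \equiv M$ then gives $w \in \overline N(M)$. An analogous argument, carried out inside a sufficiently large member of the chain in the infinite case, shows that all intermediate orders $p^t$ with $S \leq t \leq R$ are realized, producing the $R - S + 1 = r - s + 1$ $\approx$-classes of case 2 and the full ascending chain with the required nesting $x_k \in \langle x_{k+1} \rangle$ of case 3.
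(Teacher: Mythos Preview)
The paper does not contain its own proof of this lemma: it is quoted verbatim from \cite[Lemma~3.12]{drugi samostalni} and stated without proof, so there is nothing in the present paper to compare your argument against line by line.

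That said, your argument is essentially correct and follows the natural route. A few places deserve a touch more care. First, in the prime-power step your three-case split is phrased for two primes; to make it airtight you should note that once $m>1$ you can fix a prime $p\mid m$ and test the maximal prime-power part $d=q^{a}$ for \emph{every} prime $q\mid n$: the comparability condition $d\mid m$ or $m\mid d$ then forces $q=p$ in each case, which is the clean way to say ``iterated over all primes''. Second, in the reverse containment you silently use $\langle N\rangle\subseteq\langle z\rangle$; this is true, but only because $N\in C$ gives $\langle N\rangle\subseteq\langle M\rangle$, and a cyclic $p$-group has a \emph{unique} subgroup of each order, so the order inequality $p^{S}\le p^{t}$ really does force the inclusion. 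It is worth making that uniqueness explicit, since $z$ is not yet known to lie in $C$ at that point. Third, in the infinite case the direction $\overline N(N)\subseteq\overline N(z)$ is not the right one to attempt directly; you should instead compare $z$ with some $z'\in C$ satisfying $\langle z\rangle\subseteq\langle z'\rangle$ (which exists because the chain is unbounded), exactly as you did with $M$ in the finite case. Your closing sentence gestures at this but does not quite say it. With these clarifications the proof is complete and matches the standard argument (originating with Cameron in the finite case).
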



As we will see in Lemma \ref{prepoznavanje tipova klasa}, when the center of the finite-order segment of the power graph of a group is trivial, not just that we see $\equiv$-classes from the power graph, but for each $\equiv$-class we can tell whether it is simple, complex or infinitely complex. Furthermore, by Lemma \ref{prepoznavanje tipova klasa}, an isomorphism between the power graphs of two groups maps every $\equiv$-class onto a $\equiv$-class of the same type. The following lemma was first proven by Cameron \cite{power graph 2} for any finite group whose power graph has trivial center, and in \cite{drugi samostalni} it was proven for any group $\mathbf G$ whose finite-order segment of its power graph has trivial center.

\begin{lemma}[{\cite[Lemma 17]{drugi samostalni}}]\label{prepoznavanje tipova klasa}
Let $\mathbf G$ be a group such that $\lvert\Cen(\Phi)\rvert=1$, where $\Phi$ is the finite-order segment of $\mathcal G^\pm(\mathbf G)$. Let $\hat X$ denote the set $\overline N(X)=\bigcap_{x\in X}\overline N(x)$ for every $X\subseteq G$, and let $C$ be a complex $\equiv$-class. Then the following holds:
\begin{enumerate}
\item $|\hat C|=p^r$ and $|\hat C|-|C|=p^{s-1}$ for some $r,s\in\mathbb N$ such that $r>s> 0$;
\item $C$ is adjacent to no mutually nonadjacent $\equiv$-classes $D$ and $E$ such that $|D|,|E|\leq |C|$.
\end{enumerate}
Further, $p^r$ and $p^s$ are the maximum and the minimum order of an element of $C$, respectively.

If $C$ is a simple $\equiv$-class, then at least one of the above statements is not satisfied.
\end{lemma}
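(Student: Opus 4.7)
I would work from the structural classification of $\equiv$-classes in Lemma~\ref{dva tipa klasa}, writing the complex class as $C=\{x\in\langle y\rangle:o(x)\geq p^s\}$ with $o(y)=p^r$ and $r>s>0$. Since $\langle y\rangle$ is cyclic of order $p^r$ with unique subgroup of order $p^{s-1}$ consisting of the elements of order $<p^s$, we immediately read off $|C|=p^r-p^{s-1}$ and identify $p^s,p^r$ as the extremal orders in $C$.

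A central preliminary fact is the constraint that $C$ being an $\equiv$-class imposes on $\mathbf G$: comparing the closed neighborhoods of an element $x_{\min}\in C$ of order $p^s$ and of $y$, every $w\in G\setminus\langle y\rangle$ with $\langle x_{\min}\rangle\subseteq\langle w\rangle$ must in fact satisfy $\langle y\rangle\subseteq\langle w\rangle$. From this it follows that every $d\in\overline N(C)\setminus\langle y\rangle$ has $\langle y\rangle\subsetneq\langle d\rangle$ and $o(d)$ is a power of $p$, since any prime divisor $q\neq p$ of $o(d)$ would produce a cyclic subgroup of $\langle d\rangle$ of order $p^sq$ containing $\langle x_{\min}\rangle$ but not $\langle y\rangle$, contradicting the constraint.

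For part (1), I would compute
\[\overline N(C)=\langle y\rangle\cup\{z\in G:\langle y\rangle\subsetneq\langle z\rangle\}\]
from the clique property of $\langle y\rangle$ together with the extension observation above, and then iterate to establish $\overline N(\overline N(C))=\langle y\rangle$. Any $w\in\overline N(\overline N(C))$ must lie in $\langle y\rangle$ or satisfy $\langle y\rangle\subsetneq\langle w\rangle$; every element of $\langle y\rangle$ does belong. The main obstacle is ruling out $w\notin\langle y\rangle$ with $\langle y\rangle\subsetneq\langle w\rangle$: I would produce a second element $w'\in\overline N(C)$ with $\langle w'\rangle$ incomparable to $\langle w\rangle$, giving $w\not\simp w'$ and contradicting $w\in\overline N(\overline N(C))$. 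Under $|\Cen(\Phi)|=1$ together with the precise form of $C$ in Lemma~\ref{dva tipa klasa}, the cyclic $p$-extensions of $\langle y\rangle$ cannot all lie on a single chain, for otherwise $C$ would be either infinitely complex or would absorb additional $\approx$-classes from that chain, contradicting its prescribed form. This yields $|\overline N(\overline N(C))|=p^r$ and $|\overline N(\overline N(C))|-|C|=p^{s-1}$.

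For part (2), let $D,E$ be non-adjacent $\equiv$-classes adjacent to $C$ with $|D|,|E|\leq|C|$. Cross-adjacency of $\equiv$-classes places $D,E\subseteq\overline N(C)$, so each lies entirely in $\langle y\rangle$ or in the extension region $\{z:\langle y\rangle\subsetneq\langle z\rangle\}$. If either lies in $\langle y\rangle$, the clique property of $\langle y\rangle$ together with extension-containment forces $D\sim E$; if both lie in the extension region, then by the preliminary fact each contains an $\approx$-subclass of size $\phi(p^a)\geq\phi(p^{r+1})=p^r(p-1)>p^r-p^{s-1}=|C|$, contradicting $|D|,|E|\leq|C|$. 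Either way we reach a contradiction. Finally, for a simple class $C=[x]_\approx$ with $o(x)=n$, an analogous computation gives $\langle x\rangle\subseteq\overline N(\overline N(C))$, so $n$ divides $|\overline N(\overline N(C))|$. If $n$ is not a prime power, $|\overline N(\overline N(C))|$ fails to be a prime power and (1) fails. If $n=p^r$, the same method yields $|\overline N(\overline N(C))|-|C|=p^r-\phi(p^r)=p^{r-1}$, and matching with (1) would force $s=r$, violating $r>s>0$. Hence at least one of (1) and (2) fails for every simple class.
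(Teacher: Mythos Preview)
The paper does not prove this lemma at all: it is quoted from \cite[Lemma~3.14]{drugi samostalni} and stated without argument, so there is no in-paper proof to compare your plan against.

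On its own merits, your treatment of the complex case is sound and is the natural line: the identification $\overline N(C)=\langle y\rangle\cup\{z:\langle y\rangle\subsetneq\langle z\rangle\}$, the observation that any such $z$ must have $p$-power order (else an intermediate cyclic group of order $p^{s}q$ separates $x_{\min}$ from $y$), and the branching argument that a unique minimal cyclic overgroup of $\langle y\rangle$ would force an extra $\approx$-class into $C$ all work as you describe. Part~(2) is also fine.

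Your handling of the simple case, however, has a real gap. From the set inclusion $\langle x\rangle\subseteq\overline N(\overline N(C))$ you cannot infer that $n=o(x)$ divides $\lvert\overline N(\overline N(C))\rvert$, since $\overline N(\overline N(C))$ is only a subset of $G$, not a subgroup. More seriously, when $n=p^{r}$ you invoke ``the same method'' to get $\overline N(\overline N(C))=\langle x\rangle$, but that method depended on every cyclic overgroup of $\langle y\rangle$ having $p$-power order, which in turn came from $C$ containing $\approx$-classes of several different $p$-power orders. A simple class has no such leverage: $\langle x\rangle$ may admit a \emph{unique} minimal cyclic overgroup $\langle z\rangle$ with $o(z)=p^{r}q$, $q\neq p$, and then one checks that $z$ (indeed all of $[z]_\approx$) lies in $\overline N(\overline N(C))$ while $x\not\equiv z$, so $C$ remains simple yet $\overline N(\overline N(C))\supsetneq\langle x\rangle$. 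In that situation neither of your two displayed computations is valid. To repair the simple case you must either analyse $\overline N(\overline N(C))$ taking such mixed-order extensions into account, or argue directly via condition~(2).
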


%

Now, we are going to prove the following lemma, which will be useful in the proof of the Theorem \ref{neusmereni odredjuju obogacene kod konacnih}.

\begin{lemma}\label{slozena susedstva imaju sve isto kod oba}
Let $\mathbf G$ be a group such that $\lvert\Cen(\Phi)\rvert=1$, where $\Phi$ is the finite-order segment of $\mathcal P(\mathbf G)$. Let $x$ be an element of $\Phi$ contained in a complex or infinitely complex $\equiv$-class. Then $\overline N_{\mathcal P(\mathbf G)}(x)=\overline N_{\mathcal P_e(\mathbf G)}(x)$.
\end{lemma}

\begin{proof}
By the definition of complex and infinitely complex $\equiv$-classes, the order of $x$ is a power of some prime number $p$. Obviously, for every $y\in G$ such that $x\rightarrow y$, $o(y)$ is a power of $p$ too. Suppose that there is some $y\in G$ such that $y\rightarrow x$ and such that $o(y)$ is not a power of $p$. Then $x^p\simp y^p\not\simp x$, and if there was some $z\in G$ such that $z^p=x$, then $x\simp y\not\simp z$. It follows that $[x]_\equiv$ is a simple $\equiv$-class, which is a contradiction. Therefore, for every $y\in\overline N_{\mathcal P(\mathbf G)}(x)$, $o(y)$ is a power of $p$. Additionally, this also implies that, for any $z\in G$ such that $x\sime z$, $o(z)$ is a power of $p$ as well. Notice that, for any pair of elements $g,h\in G$ whose orders are powers of a prime number, $g\simp h$ if and only if $g\sime h$. Therefore, $x$ has the same closed neighborhood in the power graph and the enhanced power graph of $\mathbf G$.
\end{proof}\\

Now we are ready to prove the main result of this section.

\begin{theorem}\label{neusmereni odredjuju obogacene kod konacnih}
Let $\mathbf G$ and $\mathbf H$ be groups whose power graphs are isomorphic. Then their enhanced power graphs are isomorphic too.
\end{theorem}

\begin{proof}
By Proposition \ref{izomorfizam izmedju infinite-order segmenata}, $\mathcal P_e(\mathbf G)$ and $\mathcal P_e(\mathbf H)$ have isomorphic infinite-order segments. Therefore, it remains to prove that $\mathcal P_e(\mathbf G)$ and $\mathcal P_e(\mathbf H)$ have isomorphic finite-order segments as well. Suppose first that the center of the finite-order segment of $\mathcal P(\mathbf G)$ is nontrivial, i.e. $\lvert\Cen(\mathcal P(\mathbf G))\rvert>1$. Then, by Proposition \ref{kameronova propozicija}, $\mathbf G_{<\infty}$ is a finite cyclic subgroup of $\mathbf G$, or the order of every element of finite order of $\mathbf G$ is a power of a prime number $p$. Moreover, by the same proposition, and because the finite-order segments of $\mathcal P(\mathbf G)$ and $\mathcal P(\mathbf H)$ are isomorphic, $\mathbf G_{<\infty}$ is cyclic if and only if $\mathbf H_{<\infty}$ is cyclic. Also, the order of every element from $G_{<\infty}$ is a power of a prime number if and only if the order of every element from $H_{<\infty}$ is a power of a prime number. In the former case, the finite-order segments of $\mathcal P_e(\mathbf G)$ and $\mathcal P_e(\mathbf H)$ are complete graphs of the same order, and in the latter case, the finite-order segments of $\mathcal P_e(\mathbf G)$ and  $\mathcal P_e(\mathbf H)$ are equal to the finite-order segments of $\mathcal P(\mathbf G)$ and  $\mathcal P(\mathbf H)$, respectively. Therefore, if $\lvert\Cen(\mathcal P(\mathbf G))\rvert>1$, then $\mathcal P_e(\mathbf G)\cong\mathcal P_e(\mathbf H)$.

Suppose now that $\lvert\Cen(\mathcal P(\mathbf G))\rvert=1$. Let $\varphi: G\rightarrow H$ be an isomorphism from $\mathcal P(\mathbf G)$ to $\mathcal P(\mathbf H)$. Then, by Lemma \ref{konacni se slikaju na konacne}, $\varphi|_{G_{<\infty}}$ is an isomorphism between finite-order segments of  $\mathcal P(\mathbf G)$ and $\mathcal P(\mathbf H)$. Let us prove that $\varphi|_{G_{<\infty}}$ is also an isomorphism between the finite-order segments of  $\mathcal P_e(\mathbf G)$ and $\mathcal P_e(\mathbf H)$. Let $x,y\in G_{<\infty}$, and let $x\sime_{\mathbf G} y$. If  $x\simp_{\mathbf G} y$, then obviously $\varphi(x)\sime_{\mathbf H}\varphi(y)$. So suppose that $x\not\simp_{\mathbf G} y$. Then, by Lemma \ref{slozena susedstva imaju sve isto kod oba}, $[x]_{\equiv_{\mathbf G}}$ and $[y]_{\equiv_{\mathbf G}}$ are simple $\equiv_{\mathbf G}$-classes. Furthermore, by Lemma \ref{prepoznavanje tipova klasa},  neither $[x]_{\equiv_{\mathbf G}}$ nor $[y]_{\equiv_{\mathbf G}}$ satisfies both conditions from Lemma \ref{prepoznavanje tipova klasa}. Since $\varphi$ is an isomorphism, neither $[\varphi(x)]_{\equiv_{\mathbf H}}$ nor $[\varphi(y)]_{\equiv_{\mathbf H}}$ satisfies both conditions from Lemma \ref{prepoznavanje tipova klasa}. Therefore, by Lemma \ref{prepoznavanje tipova klasa}, implies that $[\varphi(x)]_{\equiv_{\mathbf H}}$ and $[\varphi(y)]_{\equiv_{\mathbf H}}$ are also simple $\equiv_{\mathbf H}$-classes. Since $x$ and $y$ are not adjacent in $\mathcal P(\mathbf G)$, there is an element $z\in G$ such that $z\rightarrow_{\mathbf G}x,y$. Then $[x]_{\equiv_{\mathbf G}}\neq[y]_{\equiv_{\mathbf G}}\neq[z]_{\equiv_{\mathbf G}}\neq[x]_{\equiv_{\mathbf G}}$, and $[\varphi(x)]_{\equiv_{\mathbf H}}\neq[\varphi(y)]_{\equiv_{\mathbf H}}\neq[\varphi(z)]_{\equiv_{\mathbf H}}\neq[\varphi(x)]_{\equiv_{\mathbf H}}$. Also, $[z]_{\equiv_{\mathbf G}}$ and $[\varphi(z)]_{\equiv_{\mathbf H}}$ are both simple $\equiv$-classes. Now, by Lemma \ref{prepoznavanje tipova klasa}, $z\rightarrow_{\mathbf G}x,y$ implies  $\varphi(z)\rightarrow_{\mathbf H}\varphi(x),\varphi(y)$, i.e. $\varphi(x)\sime_{\mathbf H}\varphi(y)$. In an analogous way it is proven that, for every $x,y\in G$, $\varphi(x)\sime_{\mathbf H}\varphi(y)$ implies $x\sime_{\mathbf G}y$. This proves that $\mathcal P_e(\mathbf G)$ and $\mathcal P_e(\mathbf H)$ have isomorphic finite-order segments.

Let $e_{\mathbf G}$ and $e_{\mathbf H}$ denote the identity elements of $\mathbf G$ and $\mathbf H$, respectively. Now, if $\varphi(e_{\mathbf G})=e_{\mathbf H}$, then $\varphi$ is an isomorphism between $\mathcal P_e(\mathbf G)$ and $\mathcal P_e(\mathbf H)$. On the other hand, if $\varphi(e_{\mathbf G})\neq e_{\mathbf H}$, then $\varphi\circ\tau$ is an isomorphism between $\mathcal P_e(\mathbf G)$ and $\mathcal P_e(\mathbf H)$, where $\tau$ is the transposition of $e_{\mathbf H}$ and $\varphi(e_{\mathbf G})$. Thus, $\mathcal P_e(\mathbf G)$ and $\mathcal P_e(\mathbf H)$ are isomorphic.
\end{proof}

%

\section{Perfectness of the Enhanced Power Graph of a Finite Group}\label{sekcija za perfektnost}


The {\bf chromatic number} of a graph $\Gamma$, denoted by $\chi(\Gamma)$, is the minimum number of colors by which one could color the vertices of $\Gamma$ so that no two adjacent vertices have the same color. A {\bf clique} of a graph $\Gamma$ is a set of its vertices $C$ such that $\Gamma[C]$ is a complete graph. The {\bf clique number} of a graph $\Gamma$, denoted by $\omega(\Gamma)$, is the maximal cardinality of a clique in $\Gamma$. A graph $\Gamma$ is {\bf perfect} if $\omega(\Delta)=\chi(\Delta)$ for every induced subgraph $\Delta$ of $\Gamma$. A graph is a {\bf Berge graph} if neither $\Gamma$ nor the complement of $\Gamma$ contains an odd-length cycle of size at least $5$ as an induced subgraph.

The following theorem, which was proven in \cite{strong perfect graph}, is known as The Strong Perfect Graph Theorem, and it will play an essential role in proving all of the results of this section.

\begin{theorem}[{\cite[Theorem 1.2]{strong perfect graph}}]\label{jaka teorema o perfektnim grafovima}
A finite graph is perfect if and only if it is a Berge graph.
\end{theorem}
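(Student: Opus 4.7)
The statement is the Strong Perfect Graph Theorem of Chudnovsky, Robertson, Seymour and Thomas, so any proof sketch here can only convey the broad architecture of their very long original argument. The plan is to split the equivalence into an easy direction and a hard direction, and to outline the structural strategy that carries the hard direction.

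The easy direction, that a perfect graph is Berge, is a direct verification. If $\Gamma$ were perfect and contained an induced copy of $C_{2k+1}$ with $k\geq 2$, then that induced subgraph would have clique number $2$ and chromatic number $3$, contradicting perfectness. A parallel computation on $\overline{C_{2k+1}}$, whose clique number is $k$ and whose chromatic number is $k+1$, rules out induced odd antiholes of length at least $5$. Hence a perfect graph is Berge.

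For the hard direction, that a Berge graph is perfect, the plan is to follow the Chudnovsky--Robertson--Seymour--Thomas decomposition strategy. One first establishes a structure theorem: every Berge graph either belongs to one of a short list of \emph{basic} classes---bipartite graphs, line graphs of bipartite graphs, the complements of these two, and double split graphs---or admits one of a prescribed set of structural decompositions, namely a $2$-join, a complement $2$-join, or a balanced skew partition (the original proof also used homogeneous pairs, later shown to be redundant). One then verifies by elementary arguments that every member of each basic class is perfect, and that each of the allowed decompositions preserves perfectness. Induction on $|V(\Gamma)|$, applied to the two pieces produced by whichever decomposition is available, combined with the basic-class verification, yields the result.

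The main obstacle, by a very wide margin, is the decomposition theorem itself: showing that any Berge graph outside the basic classes must admit one of the prescribed decompositions requires an extremely intricate case analysis organised around configurations such as wheels, stretchers and assorted trigraph structures, and it constitutes essentially the entire bulk of the original $150$-page proof. A secondary but genuinely delicate point is the verification that balanced skew partitions preserve perfectness. Since the present paper does not reproduce any of this material, Theorem \ref{jaka teorema o perfektnim grafovima} will be invoked purely as a black box in what follows.
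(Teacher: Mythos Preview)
Your proposal is accurate in content, but there is nothing to compare it against: the paper does not prove Theorem~\ref{jaka teorema o perfektnim grafovima} at all. It is merely quoted from \cite{strong perfect graph} and used as a black box, exactly as you note in your final sentence. So your sketch of the Chudnovsky--Robertson--Seymour--Thomas architecture is a correct summary of the original source, but it goes well beyond anything the present paper attempts; for the purposes of this paper a bare citation suffices.
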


Theorem \ref{glavna za nilpotentne i perfektnost} was proven in \cite{nas prvi}. It gives a necessary and sufficient condition for a finite nilpotent group to have perfect enhanced power graph.

\begin{theorem}[{\cite[Theorem 6.2]{nas prvi}}]\label{glavna za nilpotentne i perfektnost}
A finite nilpotent group has perfect enhanced power graph  if and only if  it has at most two noncyclic Sylow subgroups.
\end{theorem}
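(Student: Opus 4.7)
The plan is to first pass to the direct-product decomposition. A finite nilpotent group $\mathbf{G}$ is isomorphic to $\mathbf{P}_1\times\cdots\times\mathbf{P}_k$, where $\mathbf{P}_i$ is the Sylow $p_i$-subgroup. Since the Sylow orders are pairwise coprime, a pair of tuples $(x_1,\ldots,x_k)$, $(y_1,\ldots,y_k)$ generates a cyclic subgroup of $\mathbf{G}$ if and only if each coordinate pair $x_i,y_i$ generates a cyclic subgroup of $\mathbf{P}_i$; this yields the isomorphism
\[
\mathcal{G}_e(\mathbf{G})\cong \mathcal{G}_e(\mathbf{P}_1)\boxtimes\cdots\boxtimes \mathcal{G}_e(\mathbf{P}_k).
\]
Each cyclic Sylow $\mathbf{P}_i$ contributes a complete factor, and taking the strong product of a graph with $K_n$ is exactly the graph substitution that replaces every vertex by $K_n$; by Lov\'asz's substitution lemma this preserves perfectness. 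So $\mathcal{G}_e(\mathbf{G})$ is perfect if and only if the strong product over the non-cyclic Sylow subgroups alone is perfect.

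For the $(\Rightarrow)$ direction I argue by contrapositive: supposing $\mathbf{G}$ has three non-cyclic Sylow subgroups $\mathbf{P}_1,\mathbf{P}_2,\mathbf{P}_3$ for distinct primes $p_1,p_2,p_3$, I produce an induced $C_5$ in $\mathcal{G}_e(\mathbf{G})$. In each non-cyclic $p_i$-group there exist $a_i,b_i\in P_i$ of order $p_i$ with $\langle a_i\rangle\neq\langle b_i\rangle$, so $a_i$ and $b_i$ are not enhanced-adjacent while each of them is enhanced-adjacent to the identity. Setting the identity in every omitted coordinate, take
\[
v_1=(a_1,e,e),\; v_2=(e,a_2,e),\; v_3=(b_1,e,a_3),\; v_4=(b_1,b_2,e),\; v_5=(e,b_2,b_3).
\]
A direct verification shows that each of the five consecutive pairs (indices modulo $5$) exhibits no $a$-versus-$b$ clash in any coordinate and is therefore enhanced-adjacent, while each of the five non-consecutive pairs has exactly one clashing coordinate (coordinate $1$ witnesses $\{v_1,v_3\}$ and $\{v_1,v_4\}$; coordinate $2$ witnesses $\{v_2,v_4\}$ and $\{v_2,v_5\}$; coordinate $3$ witnesses $\{v_3,v_5\}$) and is non-adjacent. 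Hence these five vertices induce a $C_5$, and by the Strong Perfect Graph Theorem $\mathcal{G}_e(\mathbf{G})$ is not perfect.

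For the $(\Leftarrow)$ direction one must show that the strong product of at most two enhanced power graphs of coprime-order $p$-groups is perfect. For a single $p$-group $\mathbf{P}$, $\mathcal{G}_e(\mathbf{P})$ coincides with $\mathcal{G}(\mathbf{P})$ and is the comparability graph of the inclusion preorder on cyclic subgroups, hence perfect. For the product $\mathcal{G}_e(\mathbf{P})\boxtimes\mathcal{G}_e(\mathbf{Q})$, I would invoke the Strong Perfect Graph Theorem and rule out induced odd holes and odd antiholes of length at least $5$. The key structural fact is the dichotomy that in a $p$-group, if $u\sime v$, $v\sime w$, and $u,w$ are not enhanced-adjacent, then $\langle v\rangle\subsetneq\langle u\rangle$ and $\langle v\rangle\subsetneq\langle w\rangle$ with $\langle u\rangle$ and $\langle w\rangle$ incomparable. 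Assuming an induced $C_{2k+1}$ in the product, a pigeonhole argument places at least $k+1$ of its $2k+1$ non-edges in a single factor; applying the dichotomy to overlapping consecutive triples of cycle vertices projected to that factor yields incompatible containment constraints (already for $C_5$ one derives both $\langle v_i\rangle\subsetneq\langle v_j\rangle$ and $\langle v_j\rangle\subsetneq\langle v_i\rangle$ from two adjacent triples). The antihole case is handled dually. The main obstacle is bundling this case analysis uniformly across all odd lengths $\geq 5$ rather than grinding through the small cases individually.
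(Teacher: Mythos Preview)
The paper does not itself prove this theorem: it is quoted from \cite{nas prvi}. What the present paper does contain is Lemma~\ref{prosireni je perfektan} and Theorem~\ref{dovoljan uslov za perfektnost}, which together give (a generalisation of) the sufficiency direction. So the comparison below is between your argument and the argument of Lemma~\ref{prosireni je perfektan}.

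Your necessity direction is fine: the explicit $C_5$ you wrote down is correct, and this is the standard construction.

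Your sufficiency direction has a genuine gap. First a small slip: an induced $C_{2k+1}$ has $(2k{+}1)(k{-}1)$ non-edges, not $2k{+}1$; the number $2k{+}1$ is correct only for $k=2$, so the pigeonhole step as stated does not generalise. More importantly, you have the right local observation but not the right way to globalise it. Your three-vertex dichotomy (if $u\sime v\sime w$ and $u\not\sime w$ in a $p$-group then $\langle v\rangle\subsetneq\langle u\rangle$ and $\langle v\rangle\subsetneq\langle w\rangle$) immediately implies the four-vertex statement used in Lemma~\ref{prosireni je perfektan}: for $x,y,z,t$ in a single $G_{p_i}$, if $x\simeqe y\simeqe z\simeqe t$ then $x\simeqe z$ or $y\simeqe t$. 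With this in hand the odd-hole case is clean and uniform: in an induced $C_{2k+1}$ one has, say, $a_1\not\simeqe a_3$ in the first coordinate; the four-vertex property applied to $a_1,a_2,a_3,a_4$ forces $a_2\simeqe a_4$, hence $b_2\not\simeqe b_4$; propagating once around the odd cycle brings the non-adjacency back to the first coordinate at the wrong parity, yielding $a_{2k+1}\not\simeqe a_2$ together with $a_1\not\simeqe a_3$ on the path $a_{2k+1}\simeqe a_1\simeqe a_2\simeqe a_3$, a contradiction. For antiholes of length $\geq 7$ the paper's trick is not ``dual'' at all: pick the three disjoint non-edges $\{1,2\},\{3,4\},\{5,6\}$, pigeonhole two of them into the same coordinate, and then the four adjacencies $3\text{--}1\text{--}4\text{--}2$ in that coordinate contradict the four-vertex property directly. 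This replaces the open-ended case analysis you flagged as the main obstacle.
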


By Theorem \ref{glavna za nilpotentne i perfektnost}, for any prime numbers $p$ and $q$, the enhanced power graph of any finite nilpotent group of order $p^nq^m$ is perfect. Therefore, it is natural to ask ourselves whether that is true for all finite groups. By the following example however, we give the negative answer to that question.

\begin{example}\label{kontraprimer}
	The enhanced power graph of $\mathbf C_{36}\times\mathbf C_6\times\mathbf S_3$ is not perfect.
\end{example}

\begin{figure}[h!]
	\centering
	\begin{tikzpicture}[>=stealth',shorten >=0pt,auto,node distance=1cm,semithick,on grid,rotate=90]
		\def\nodedistance{1cm}
		
		\tikzstyle{every state}=[minimum size=4pt, fill=black,draw=black,inner sep=0cm]
		\node[state,label=above:{$(e,b^2,e)$}]  (a) at (0:1*\nodedistance) {};
		\node[state,label=left:{$(e,b^3,e)$}]  (b) at (72:1*\nodedistance) {};
		\node[state,label=left:{$(a^4,e,x)$}]  (c)  at (144:1*\nodedistance) {};
		\node[state,label=right:{$(a^6,e,e)$}]  (d) at (216:1*\nodedistance) {};
		\node[state,label=right:{$(a^9,e,y)$}]  (e)  at (288:1*\nodedistance) {};
		
		\path[-] (a) edge[bend right=15]  (b)
		(b)    edge [bend right=15]   (c)
		(c)     edge [bend right=15] (d)
		(d) edge [bend right=15]   (e)
		(e)     edge [bend right=15]   (a);
	\end{tikzpicture}
	\caption{A pentagon in $\mathcal P_e(\mathbf C_{36}\times\mathbf C_6\times\mathbf S_3)$}\label{pentagon u grupi p^nq^m}
\end{figure}

\begin{proof}
	Let $a$ and $b$ be generating elements of $\mathbf C_{36}$ and $\mathbf C_6$, respectively. Let $\mathbf S_3=\langle x,y\rangle$, where $o(x)=3$ and $o(y)=2$, and let $\mathbf G$ denote $\mathbf C_{36}\times\mathbf C_6\times\mathbf S_3$. Let us prove that elements $(a^4,e,x)$, $(a^6,e,e)$, $(a^9,e,y)$, $(e,b^2,e)$, and $(e,b^3,e)$ induce the pentagon in $\mathcal P_e(\mathbf G)$ as in Figure \ref{pentagon u grupi p^nq^m}. Namely, notice that
	\begin{align*}
		(a^4,e,x), (a^6,e,e)&\in\langle (a^2,e,x^2) \rangle, & (e,b^2,e), (e,b^3,e)&\in\langle (e,b,e) \rangle\text{ and}\\
		(a^6,e,e), (a^9,e,y)&\in\langle (a^3,e,y) \rangle, & (e,b^3,e), (a^4,e,x)&\in\langle (a^4,b^3,x) \rangle.\\
		(a^9,e,y), (e,b^2,e)&\in\langle (a^9,b^2,y) \rangle,
	\end{align*}
	Furthermore, by Lemma \ref{deljivost redova} (which shall be proven immediately after this example),  $(a^4,e,x)\not\sime (e,b^2,e)$ since $o\big((e,b^2,e)\big)$ divides $o\big((a^4,e,x)\big)$ and $(e,b^2,e)\not\in\langle(a^4,e,x)\rangle$. Similarly, $(a^6,e,e)\not\sime (e,b^2,e)$ and $(a^6,e,e)\not\sime(e,b^3,e)\not\sime (a^9,e,y)$. Finally, we obtain that $(a^4,e,x)\not\sime (a^9,e,y)$ because they do not commute. This completes our proof.
\end{proof}\\

The following statements will be of use later.

\begin{lemma}\label{deljivost redova}
	Let $\mathbf G$ be a finite group, and let $a$ and $b$ be neighbors in $\mathcal G_e(\mathbf G)$ such that $o(a)$ divides $o(b)$. Then
	\begin{enumerate}
	\item $a$ is a power of $b$.
	\item $\overline N(b)\subseteq \overline N(a)$.
	\end{enumerate}
\end{lemma}

\begin{proof}
	{\it 1.} Since $a$ and $b$ belong to the same cyclic subgroup $C$, the subgroup $\langle a \rangle$ is contained in $C$. Therefore, $\langle a \rangle$ is the only cyclic subgroup of order $o(a)$ which is contained in $C$.
	Since $\langle b \rangle$ is a subgroup of $C$ and contains a subgroup of order $o(a)$, $\langle a \rangle$ must be a subgroup of $\langle b \rangle$.
	
	{\it 2.} If $b$ is adjacent to $c$, they generate a cyclic subgroup which obviously contains $a$. So, $a$ is adjacent to $c$ in the enhanced power graph.
\end{proof}\\

\begin{corollary}\label{deljivost u enhanced grafu}
	Let $\mathbf G$ be a finite group, and let $a$ and $b$ be neighbors in $\mathcal G_e(\mathbf G)$ such that $o(a)$ divides $o(b)$. Then
	\begin{enumerate}
	\item  an induced odd-length cycle in $\mathcal G_e(\mathbf G)$ of size at least $5$ cannot contain both $a$ and $b$.
	\item the complement of an induced odd-length cycle in $\mathcal G_e(\mathbf G)$ of size at least $5$ cannot contain both $a$ and $b$.
	\end{enumerate}
\end{corollary}

\begin{proof}
	Immediate consequence of Lemma \ref{deljivost redova} ({\it 1.}).
\end{proof}\\

\begin{lemma}\label{jedinstvena maksimalna}
	Let $a$ be an element of a finite group $\mathbf G$ which belongs to exactly one maximal cyclic subgroup of  $\mathbf G$. Then $a$ cannot belong neither to an induced odd-length cycle of size at least 5, nor to the complement of an induced odd-length cycle of size at least 5.
\end{lemma}

\begin{proof}
	Let $a$ belongs to a forbidden cycle or its complement. Then $a$ has two neighbors which are not adjacent to each other, so it belongs to two different maximal cyclic subgroups.
\end{proof}\\

\begin{corollary}\label{generatori maksimalne}
	Let $a$ be a generator of a maximal cyclic subgroup of $\mathbf G$. Then $a$ cannot belong neither to an induced odd-length cycle of size at least 5, nor to the complement of an induced odd-length cycle of size at least 5.
\end{corollary}

\begin{proof}
	Immediate consequence of Lemma \ref{jedinstvena maksimalna}.
\end{proof}\\

The following is a consequence of Corollary \ref{deljivost u enhanced grafu}.

\begin{theorem}\label{pnq}
	Let $\mathbf G$ be a finite group of order $p^nq$, for some $n\geq 0$ and some primes $p$ and $q$. Then group $\mathbf G$ has perfect enhanced power graph.
\end{theorem}

\begin{proof}
	The set $G$ can be partitioned into two subsets, $A$ and $B$. $A$ consists of elements of order $p^k$, $k\leq n$, and $B$ consists of elements of order $p^kq$, $k\leq n$. An induced odd-length cycle must contain two adjacent vertices from the same set, which is a contradiction with Corollary \ref{deljivost u enhanced grafu}. The complement of any such cycle contains an odd cycle (not necessarily induced), which causes contradiction again.
\end{proof}\\

\begin{theorem}\label{p2q2}
	A group of order $p^2q^2$, where $p$ and $q$ are distinct primes, has perfect enhanced power graph.
\end{theorem}

\begin{proof}
	Let $\mathbf G$ be a group of order $p^2q^2$. We will assume that $p>q$. Let us suppose that $\mathcal G_e(\mathbf G)$ contains an odd length cycle $K$ of size at least 5 as an induced subgraph. $\mathbf G$ does not have elements of order $p^2q^2$, since it is not cyclic. $K$ cannot contain elements of order $p^2q$
	or $pq^2$, for any of them would generate a maximal cyclic subgroup, which is impossible, according to Corollary \ref{generatori maksimalne}. So, only the elements of order $p$, $q$, $pq$, $p^2$ and $q^2$ could be on $K$. Two elements of order $p^2$ and $q^2$ which belong to $K$ cannot be adjacent, because they would generate a cyclic subgroup of order $p^2q^2$.
	
	$K$ must contain an element of order $p^2$. Otherwise, we would be able to partition vertices of $K$ into two subsets $A$ and $B$, such that $A$ contains elements of order $p$ and $pq$ and $B$ contains elements of order $q$ and $q^2$. According to Corollary \ref{deljivost u enhanced grafu}, no two vertices from $A$ could be adjacent, and no two vertices from $B$ could be adjacent, which contradicts the assumption that $K$ is an odd cycle.
	
	Let $p^2q^2\neq 36$. If $s_p$ denotes the number of Sylow $p$-subgroups, then $s_p=kp+1$ for some nonnegative integer $k$ and  $s_p \mid q^2$. From $p>q$ we conclude that $kp+1=q^2$, or $s_p=1$. If $s_p\neq 1$, we get $kp=(q-1)(q+1)$, which implies $p=q+1$. But this is possible only if $p=3$, $q=2$ and $|G|=36$ (then $s_3=4$). This means that $\mathbf G$ has a unique Sylow $p$-subgroup. As a consequence, we get that no two elements of order $p^2$ could be on $K$, since they would belong to two different Sylow $p$-subgroups. Also, no element of order $p$ could appear on $K$ since every element of order $p$ on $K$ would be contained in another Sylow $p$-subgroup (different from that containing the element of order $p^2$ since those two elements could not be adjacent). Now, we can notice that elements of order $q$ could not belong to $K$, because they can be adjacent only to the element of order $p^2$. The only orders left on the cycle $K$ are $p^2$, $q^2$ and $pq$, but this would contradict to Corollary \ref{deljivost u enhanced grafu}, since elements of order $p^2$ and $q^2$ cannot be adjacent.
	
	Let $p^2q^2=36$. We have already seen that $K$ must contain an element of order 9. This means that 3-Sylow subgroups of $\mathbf G$ are cyclic. If there is more than one of them, then $s_3=4$. Since the cyclic subgroup of order 9 has 6 generators, $\mathbf G$ contains 24 elements of order 9. Every element of order 9 which belong to $K$ must be contained in two cyclic subgroups of order 18, which contain more than 12 elements of order different from 9. Therefore $|G|\geq 37$, which is a contradiction. So, $s_3=1$ and the proof is the same as for the case $p^2q^2\neq 36$.
	
	Suppose now that $\mathcal G_e(\mathbf G)$ contains the complement $\overline K$ of an odd length cycle $K$ of size at least 5 as an induced subgraph. In fact, we may assume that the length of $K$ is at least 7, since the cycle of length 5 is isomorphic to its complement. $\overline K$ cannot contain elements of order $p^2q^2$, $p^2q$ or $pq^2$ for the same reasons that we used to explain why $K$ cannot contain them. Notice that $\overline K$ contains a triangle. By Corollary \ref{deljivost u enhanced grafu}, the orders of the elements in the triangle must be from the set $\{p^2,q^2,pq\}$, but this is impossible, since two elements of order $p^2$ and $q^2$ which belong to $\overline K$ cannot be adjacent.
\end{proof}\\

\begin{remark} In the proof of the previous theorem, we actually proved that if a group $\mathbf G$ of order $p^2q^2$ contains a forbidden cycle, than it has a normal Sylow subgroup. In fact, it is known from group theory that any group of order $p^2q^2$ contains a normal Sylow subgroup.
\end{remark}

In the end, we give a complete characterization of symmetric and alternative groups with perfect enhanced graphs.

\begin{proposition}\label{simetricna}
	The symmetric group $\mathbf S_n$ has perfect enhanced power graph if and only if $n\leq 7$, and the alternative group $\mathbf A_n$ has perfect enhanced power graph if and only if $n\leq 8$.
\end{proposition}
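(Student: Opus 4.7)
The plan is to split the argument by direction, using throughout the fact that cyclicity of $\langle x,y\rangle$ is intrinsic to the pair $x,y$, so $\mathcal{G}_e(\mathbf H)$ is an induced subgraph of $\mathcal{G}_e(\mathbf G)$ whenever $\mathbf H\leq\mathbf G$. For the non-perfect direction, the pentagon $\{(1\,2\,3\,4\,5), (6\,7\,8), (1\,2), (3\,4\,5), (6\,7)\}$ in $\mathbf S_8$ displayed just before the proposition induces a pentagon in $\mathcal{G}_e(\mathbf S_n)$ for every $n\geq 8$. For $\mathbf A_n$ with $n\geq 9$, I would verify by direct computation that the five elements drawn in Figure \ref{pentagon u S8 i A9} induce a pentagon in $\mathcal{G}_e(\mathbf A_9)$, which then propagates to $\mathcal{G}_e(\mathbf A_n)$ for all $n\geq 9$ by the same subgroup observation; Theorem \ref{jaka teorema o perfektnim grafovima} then forbids perfectness in these cases.

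For the perfect direction I would handle the cases according to $n$. When $n\leq 4$, the orders $|\mathbf S_n|$ and $|\mathbf A_n|$ have at most two prime divisors, so the corollary to Theorem \ref{dovoljan uslov za perfektnost} applies immediately. For each of the remaining seven groups $\mathbf S_5,\mathbf S_6,\mathbf S_7,\mathbf A_5,\mathbf A_6,\mathbf A_7,\mathbf A_8$, I would adapt the method from the $\mathbf A_5$ example preceding the proposition: since no induced $C_{2k+1}$ or $\overline{C}_{2k+1}$ with $k\geq 2$ can contain two vertices with identical closed neighborhood, $\mathcal{G}_e(\mathbf G)$ is perfect if and only if the quotient $\mathcal{G}_e(\mathbf G)/\mathord{\equive}$ is. Since an $\equive$-class is controlled by the lattice of cyclic subgroups in which its representative lies, this quotient is a small combinatorial object; for each of the seven groups one enumerates the cyclic subgroups together with their inclusions and verifies directly that the reduced graph contains no induced odd hole or antihole of length at least five.

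The main obstacle I expect is the $\mathbf A_8$ case: its order $|\mathbf A_8|=20160=2^6\cdot 3^2\cdot 5\cdot 7$ has four prime divisors, no Sylow subgroup is unique, and both the Sylow $2$- and $3$-subgroups are non-cyclic, so neither Theorem \ref{dovoljan uslov za perfektnost} nor its corollary applies. The element orders in $\mathbf A_8$ are $\{1,2,3,4,5,6,7,15\}$, yielding a richer cyclic-subgroup lattice than in the smaller cases, and the verification that no five vertices of $\mathcal{G}_e(\mathbf A_8)/\mathord{\equive}$ can be arranged into a $C_5$ (or any longer odd hole or antihole) is the most computational step of the proof; I would organise this analysis around the possible orders of vertices of a putative induced odd hole and the very restricted ways in which distinct cyclic subgroups of $\mathbf A_8$ can intersect.
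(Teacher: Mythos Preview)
Your plan is broadly sound, but it diverges from the paper's argument in a way that costs you a lot of work, and it contains one factual slip.

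First, the slip: the forbidden configuration exhibited for $\mathbf A_9$ in Figure~\ref{pentagon u S8 i A9} is a \emph{heptagon} on seven elements, not a pentagon on five. (Indeed one can check that $\mathcal G_e(\mathbf A_8)$ contains no induced pentagon either, so a pentagon in $\mathbf A_9$ would need to be produced from scratch.) This is minor and does not affect the logic of the non-perfect direction.

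For the perfect direction, the paper's route is much shorter than yours. Since $\mathbf S_n\leq\mathbf S_7$ and $\mathbf A_n\leq\mathbf A_8$ for the relevant small $n$, it suffices to prove perfectness only for $\mathbf S_7$ and $\mathbf A_8$; there is no need to treat the other five groups separately. More importantly, the paper avoids any enumeration of cyclic subgroups or explicit computation of $\mathcal G_e(\mathbf G)/{\equive}$. Instead it isolates those element orders $m$ for which every $g$ of order $m$ lies in a unique maximal cyclic subgroup (so that the closed neighborhood of $g$ is a clique): in $\mathbf S_7$ these are the orders $5,7,10$, and in $\mathbf A_8$ they are $5,7,15$. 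Such vertices cannot participate in any induced odd hole or antihole. What remains in both groups is the set of elements whose order divides $12=2^2\cdot 3$, and on this set the enhanced power graph is Berge by exactly the argument of Lemma~\ref{prosireni je perfektan} and Theorem~\ref{dovoljan uslov za perfektnost} (only the two primes $2$ and $3$ are involved). This two-step reduction --- strip off the ``clique-neighborhood'' orders, then invoke the two-prime Berge lemma --- replaces the case analysis you anticipate for $\mathbf A_8$ with a uniform structural argument.
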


\begin{proof}
	The enhanced power graphs of $\mathbf S_8$ and $\mathbf A_9$ contain a pentagon and a heptagon as induced subgraphs, respectively, as displayed in Figure \ref{pentagon u S8 i A9}.
	Therefore, $\mathcal P_e(\mathbf S_n)$ is not perfect for any $n\geq 8$, and $\mathcal P_e(\mathbf A_n)$  is not perfect for any $n\geq 9$.
	
	\begin{figure}[h!]
		\centering
		\begin{tikzpicture}[>=stealth',shorten >=0pt,auto,node distance=1cm,semithick,on grid,rotate=90]
			\def\nodedistance{1cm}
			
			\tikzstyle{every state}=[minimum size=4pt, fill=black,draw=black,inner sep=0cm]
			\node[state,label=above:$(1\ 2\ 3\ 4\ 5)$]  (a) at (0:1*\nodedistance) {};
			\node[state,label=left:$(6\ 7\ 8)$]  (b) at (72:1*\nodedistance) {};
			\node[state,label=left:$(1\ 2)$]  (c)  at (144:1*\nodedistance) {};
			\node[state,label=right:$(3\ 4\ 5)$]  (d) at (216:1*\nodedistance) {};
			\node[state,label=right:$(6\ 7)$]  (e)  at (288:1*\nodedistance) {};
			
			\path[-] (a) edge[bend right=15]  (b)
			(b)    edge [bend right=15]   (c)
			(c)     edge [bend right=15] (d)
			(d) edge [bend right=15]   (e)
			(e)     edge [bend right=15]   (a);
		\end{tikzpicture}\hspace{1cm}
		%
		%
		\begin{tikzpicture}[>=stealth',shorten >=0pt,auto,node distance=1cm,semithick,on grid,rotate=90]
			\def\nodedistance{1.2cm}
			
			\tikzstyle{every state}=[minimum size=4pt, fill=black,draw=black,inner sep=0cm]
			\node[state,label=above:$(1\ 2\ 3\ 4\ 5)$]  (a) at (0:1*\nodedistance) {};
			\node[state,label=left:$(7\ 8\ 9)$]  (b) at (360/7:1*\nodedistance) {};
			\node[state,label=left:$(1\ 2)(3\ 4)$]  (c)  at (2*360/7:1*\nodedistance) {};
			\node[state,label=left:$(5\ 6\ 7)$]  (d) at (3*360/7:1*\nodedistance) {};
			\node[state,label=right:$(1\ 2)(8\ 9)$]  (e)  at (4*360/7:1*\nodedistance) {};
			\node[state,label=right:$(3\ 4\ 5)$]  (f)  at (5*360/7:1*\nodedistance) {};
			\node[state,label=right:$(6\ 7)(8\ 9)$]  (g)  at (6*360/7:1*\nodedistance) {};
			
			\path[-] (a) edge[bend right=15]  (b)
			(b)    edge [bend right=15]   (c)
			(c)     edge [bend right=15] (d)
			(d) edge [bend right=15]   (e)
			(e)     edge [bend right=15]   (f)
			(f) edge [bend right=15]   (g)
			(g)     edge [bend right=15]   (a);
		\end{tikzpicture}
		\caption{A pentagon in $\mathcal P_e(\mathbf S_8)$ and a heptagon in $\mathcal P_e(\mathbf A_9)$}\label{pentagon u S8 i A9}
	\end{figure}
	
	It is sufficient for us to show that $\mathcal P_e(\mathbf S_7)$ and $\mathcal P_e(\mathbf A_8)$ are perfect. By Theorem \ref{jaka teorema o perfektnim grafovima}, it is sufficient to prove that $\mathcal P_e(\mathbf S_7)$ and $\mathcal P_e(\mathbf A_8)$ do not contain any forbidden graph as an induced subgraph, where, in this proof, by a forbidden graph we mean an odd-length cycle of size at least $5$ or its complement.
	
	The symmetric group $\mathbf S_7$ contains non-identity elements of orders $2$, $3$, $4$, $5$, $6$, $7$, $10$ and $12$. Suppose that $\mathcal G_e(\mathbf S_7)$ contains a forbidden subgraph induced by a set $F$. Then, according to Corollary \ref{generatori maksimalne}, $F$ does not contain any element of order $7$, $10$ or $12$. Every element of order 5 is contained in exactly one maximal cyclic subgroup of $\mathbf S_7$ (of order 10) and it cannot belong to $F$ according to Lemma \ref{jedinstvena maksimalna}. Therefore, only elements of orders $2$, $3$, $4$, and $6$ belong to $F$, but this would contradict Corollary \ref{deljivost u enhanced grafu}. Therefore,  $\mathcal G_e(\mathbf S_7)$ is a Berge graph, and, by Theorem \ref{jaka teorema o perfektnim grafovima}, it is perfect.
	
	Let us prove that $\mathcal P_e(\mathbf A_8)$ is perfect too.  The group $\mathbf A_8$ contains only nonidentity elements of orders $2$, $3$, $4$, $5$, $6$, $7$ and $15$.
	Suppose that $\mathcal G_e(\mathbf A_8)$ has a forbidden subgraph induced by a set $F$. Again, according to Corollary \ref{generatori maksimalne}, $F$ does not contain any element of order $6$, $7$ or $15$. Also, if $g$ has order $5$, then it is contained in exactly one maximal subgroup of $\mathbf A_n$ (of order $15$). Therefore, it cannot belong to $F$. It follows that $F$ contains only elements of orders $2$, $3$, and $4$, which is impossible, due to Corollary \ref{deljivost u enhanced grafu}. Therefore,  $\mathcal G_e(\mathbf A_8)$ is a Berge graph, and, by Theorem \ref{jaka teorema o perfektnim grafovima}, it is perfect.
\end{proof}

\section*{Acknowledgment}

The authors acknowledge financial support of the Ministry of Education, Science and Technological Development of the Republic of Serbia (Grant No. 451-03-68/2020-14/200125).

\vbox{
\vbox{\noindent
Ivica Bo\v snjak

University of Novi Sad, Department of Mathematics and Informatics, Serbia

{\it e-mail}: \href{mailto:ivb@dmi.uns.ac.rs}{ivb@dmi.uns.ac.rs}}\

\vbox{\noindent
Roz\' alia Madar\' asz

University of Novi Sad, Department of Mathematics and Informatics, Serbia

{\it e-mail}: \href{mailto:rozi@dmi.uns.ac.rs}{rozi@dmi.uns.ac.rs}}\

\vbox{\noindent
Samir Zahirovi\' c

University of Novi Sad, Department of Mathematics and Informatics, Serbia

{\it e-mail}: \href{mailto:samir.zahirovic@dmi.uns.ac.rs}{samir.zahirovic@dmi.uns.ac.rs}}}


\begin{thebibliography}{99}\addcontentsline{toc}{chapter}{References}
\bibitem{on the structure} G. Aalipour, S. Akbari, P.J. Cameron, R. Nikandish, F. Shaveisi, {\it On the structure of the power graph and the enhanced power graph of a group}, Electron. J. Combin. {\bf 24} (2017), no. 3, Paper 3.16, 18 pp.
\bibitem{pregled za power grafove} J. Abawajy, A.V. Kelarev, M. Chowdhury, {\it Power graphs: a survey}, Electronic Journal of Graph Theory and Applications {\bf 1} (2013), no. 2, 125-147.
\bibitem{abdolahi} A. Abdollahi, A. Mohammadi Hassanabadi, {\it Noncyclic Graph of a Group}, {\bf 38} (2007), no. 7, 2057-2081.
\bibitem{afkami} M. Afkhami, A. Jafarzadeh, K. Khashyarmanesh, S. Mohammadikhah, {\it On cyclic graphs of finite semigroups}, Journal of Algebra and Its Applications, {\bf 13} (2014), no. 7, 1450035
\bibitem{bera i bunja} S. Bera, A.K. Bhuniya, {\it On enhanced power graph}, J. Algebra Appl. {\bf 17} (2018), no. 8, 8 pp.
\bibitem{cameron-graphs-defined-on-groups} P.J. Cameron, Graphs defined on groups, International Journal of Group Theory (2021), doi: \href{http://dx.doi.org/110.22108/ijgt.2021.127679.1681}{110.22108/ijgt.2021.127679.1681}
\bibitem{power graph 2} P.J. Cameron, {\it The power graph of a finite group, II}, J. Group Theory {\bf 13} (2010), no. 6, 779-783.
\bibitem{power graph} P.J. Cameron, S. Ghosh, {\it The power graph of a finite group}, Discrete Math. {\bf 311} (2011), no. 13, 1220-1222.
\bibitem{cameron3} P.J. Cameron, H. Guerra, \v S. Jurina, {\it The power graph of a torsion-free group}, Journal of Algebraic Combinatorics {\bf 49} (2019), no. 1, 83-98.
\bibitem{cameron4} P.J. Cameron, S.H. Jafari, {\it On the connectivity and independence number of power graphs of groups}, Graphs and Combinatorics {\bf 36} (2020), no. 3, 895-904.
\bibitem{cameron-manna-mehatari} P.J. Cameron, P. Manna, R. Mehatari, {\it Forbidden subgraphs of power graphs}, Electron. J. Combin. {\bf 28} (2021), no. 3, Paper 3.4, 14 pp.
\bibitem{cameron-phan} P.J. Cameron, V. Phan, {\it Enhanced power graphs of groups are weakly perfect}, \href{https://arxiv.org/abs/2207.07156}{arXiv:2207.07156}
\bibitem{cakrabarti} I. Chakrabarty, S. Ghosh, M.K. Sen, {\it Undirected power graphs of semigroups}, Semigroup Forum {\bf 78} (2009), 410-426.
\bibitem{strong perfect graph} M. Chudnovsky, N. Robertson, P. Seymour, R. Thomas, {\it The strong perfect graph theorem},  Ann. of Math. (2) 164 (2006), no. 1, 51-229.
\bibitem{dalal-kumar} S. Dalal, J. Kumar, {\it Chromatic number of the cyclic graph of infinite semigroup}, Graphs and Combinatorics {\bf 36} (2020), no. 1, 109–113.
\bibitem{hamzeh2} A. Hamzeh, A.R. Ashrafi, {\it Automorphism groups of supergraphs of the power graph of a finite group}, European J. Combin. {\bf 60} (2017), 82-88.
\bibitem{prvi kelarev i kvin} A.V. Kelarev, S.J. Quinn, {\it A combinatorial property and power graphs of groups}, Contributions to general algebra {\bf 12} (2000), 229-235.
\bibitem{kelarev4} A.V. Kelarev, S.J. Quinn, {\it A combinatorial property and power graphs of semigroups}, Commentationes Mathematicae Universitatis Carolinae {\bf 45} (2004), no. 1, 1-7.
\bibitem{drugi kelarev i kvin} A.V. Kelarev, S.J. Quinn, {\it Directed graph and combinatorial properties of semigroups}, Journal of Algebra {\bf 251} (2002), no. 1, 16-26.
\bibitem{kelarev3} A.V. Kelarev, S.J. Quinn, R. Smolikova, {\it Power graphs and semigroups of matrices}, Bulletin of the Australian Mathematical Society {\bf 63} (2001), no. 2, 341-344.
\bibitem{pregledni-cameron-i-druzina} A. Kumar, L. Selvaganesh, P.J. Cameron, T. Tamizh Chelvam, Recent developments on the power graph of finite groups - a survey, AKCE International Journal of Graphs and Combinatorics (2021), doi: \href{https://doi.org/10.1080/09728600.2021.1953359}{10.1080/09728600.2021.1953359}
\bibitem{ma-wei-zhong} X. L. Ma, H. Q. Wei, and G. Zhong, {\it The cyclic graph of a finite group}, Algebra {\bf 2013} (2013), 7 pp.
\bibitem{ma i she} X. Ma, Y. She, The metric dimension of the enhanced power graph of a finite group, Journal of Algebra and Its Applications {\bf 19} (2020), no. 1, 2050020
\bibitem{panda dalal i kumar} R.P. Panda, S. Dalal, J. Kumar, {\it On the enhanced power graph of a finite group}, Communications in Algebra {\bf 49} (2021), no. 4, 1697-1716.
\bibitem{parveen-dalal-kumar} Parveen, S. Dalal, J. Kumar, {\it Lambda Number of the enhanced power graph of a finite group}, \href{https://arxiv.org/abs/2208.00611}{arXiv:2208.00611}
\bibitem{parveen-kumar} Parveen, J. Kumar, {\it The complement of enhanced power graph of a finite group}, \href{https://arxiv.org/abs/2207.04641}{arXiv:2207.04641}
\bibitem{parveen-kumar-singh-ma} Parveen, J. Kumar, S. Singh, X. Ma, {\it Certain properties of the enhanced power graph associated with a finite group}, \href{https://arxiv.org/abs/2207.05075}{arXiv:2207.05075}
\bibitem{sitov} Y. Shitov, {\it Coloring the power graph of a semigroup}, Graphs Combin. {\bf 33} (2017), no. 2, 485-487.
\bibitem{samostalni} S. Zahirovi\' c, {\it The power graph of a torsion-free group of nilpotency class $2$}, Journal of Algebraic Combinatorics {\bf 55} (2022), no. 3, 715-727.
\bibitem{drugi samostalni} S. Zahirovi\' c, {\it The power graph of a torsion-free group determines the directed power graph}, Discrete Applied Mathematics {\bf 305} (2021), 109-118.
\bibitem{nas prvi} S. Zahirovi\' c, I. Bo\v snjak, R. Madar\' asz,  {\it A Study of Enhanced Power Graphs of Finite Groups}, Journal of Algebra and Its Applications {\bf 19} (2019), no. 4, 2050062
\end{thebibliography}
\end{document}